\newtheorem{theorem}{Theorem}[section]
\newtheorem{prop}[theorem]{Proposition}
\theoremstyle{definition}
\newtheorem{definition}[theorem]{Definition}
\newtheorem{example}[theorem]{Example}
\theoremstyle{remark}
\numberwithin{equation}{section}
\newcommand{\QED}{\hspace*{\fill}\rule{7pt}{7pt}\smallskip}
\newcommand{\vep}{\varepsilon}
\newcommand{\sigmavec}  {\boldsymbol{\sigma}}
\newcommand{\tauvec}  {\boldsymbol{\tau}}
\newcommand{\indic}{\mathbbm{1}}
\begin{document}

\title{Mathematical aspects of the Digital Annealer's simulated annealing algorithm}

\author{Bruno Hideki Fukushima-Kimura}
\address{Faculty of Science, Hokkaido University, Kita 10, Nishi 8, Kita-ku, Sapporo, Hokkaido 060-0810, Japan}

\email{bruno@math.sci.hokudai.ac.jp}

\author{Noe Kawamoto}
\address{Graduate School of Science, Hokkaido University, Kita 10, Nishi 8, Kita-ku, Sapporo, Hokkaido 060-0810, Japan}
\email{kawamoto.noe.d1@elms.hokudai.ac.jp}

\author{Eitaro Noda}
\address{Graduate School of Science, Hokkaido University, Kita 10, Nishi 8, Kita-ku, Sapporo, Hokkaido 060-0810, Japan}
\email{eitaronodax@eis.hokudai.ac.jp}

\author{Akira Sakai}
\address{Faculty of Science, Hokkaido University, Kita 10, Nishi 8, Kita-ku, Sapporo, Hokkaido 060-0810, Japan}
\email{sakai@math.sci.hokudai.ac.jp}

\begin{abstract}
The Digital Annealer is a CMOS hardware designed by Fujitsu Laboratories for high-speed solving of Quadratic Unconstrained Binary Optimization (QUBO) problems that could be difficult to solve by means of existing general-purpose computers. In this paper, we present a mathematical description of the first-generation Digital Annealer's Algorithm from the Markov chain theory perspective, establish a relationship between its stationary distribution with the Gibbs-Boltzmann distribution, and provide a necessary and sufficient condition on its cooling schedule that ensures asymptotic convergence to the ground states.
\end{abstract}

\maketitle

\section{Introduction}
Problems that involve, in a certain sense, an optimal choice of a configuration or a set of parameters among a large number of possibilities commonly arise in several problems of practical and theoretical interest, especially in various fields related to engineering, computer science, artificial intelligence, machine learning, logistics, and very-large-scale integration (VLSI) \cite{Wong1988}. The study of the so-called combinatorial optimization problems \cite{Lawler1976, Papa82} has been revealed to be relevant primarily because of their recurrent appearance in real-world issues and has been accompanied by a large number of proposals for their solution.

The Metropolis algorithm \cite{Metropolis53} is a widespread Markov chain Monte Carlo (MCMC) method for simulating the evolution of a solid in contact with a heat reservoir to thermal equilibrium. Almost thirty years after its introduction, Kirkpatrick et al. \cite{SA1983} and Černy \cite{Cerny1985} independently recognized the connection between combinatorial optimization problems and statistical mechanics motivated by their investigations in proposing a Monte Carlo algorithm approach for integrated circuit design problems and the traveling salesman problem. Their approach was essentially based upon the analogy coming from condensed matter physics of the thermal annealing of solids for obtaining low-energy states. 

Kirkpatrick et al. \cite{SA1983} showed that some combinatorial optimization problems of practical interest could be unambiguously translated (see \cite{Lucas2014} for more examples) into the minimization problem of the Hamiltonian of an Ising model in such a way that each optimal solution for the original problem corresponds to a ground state of the Hamiltonian, and vice versa. The simulated annealing algorithm (SA) is a method whose control parameter is regarded as the temperature of a physical system and consists of iterations of the Metropolis algorithm initially evaluated at very high temperatures and subsequently at slowly decreasing values of the temperature in such a way as to guarantee the convergence to a candidate for a minimal energy state. Still, under the analogy with physical systems, it is well-known that if the cooling is too rapid, the system cannot achieve thermal
equilibrium for each temperature value, which may result in a configuration with defects in the form of high-energy, metastable, locally optimal structures. A treatment of this topic from a theoretical point of view based on the Markov chain theory is present in \cite{AK1989,Gelfand1985,Hajek88,Lundy1986,Mitra1985,LA1987}, where specifically in  \cite{Hajek88} was derived a necessary and sufficient condition on the cooling speed that guarantees asymptotic convergence of the SA to the ground states.

Achieving rigorous results in order to obtain a deep understanding of algorithms proposed to solve combinatorial optimization problems is becoming highly desirable, especially due to the increasing demand for methods for efficiently solving large-scale problems. In \cite{Liu2020}, the authors explored the mathematical foundations of dynamical system algorithms to support their applications in coherent Ising machines (CIM) \cite{CIM1,CIM2}, Kerr-nonlinear parametric oscillators \cite{KPO}, and the simulated bifurcation (SB) algorithm \cite{SB} in the search for Hamiltonian minimizers. In \cite{SCA21}, the authors provided a theoretical treatment of a simulated annealing algorithm based on probabilistic cellular automata called Stochastic Cellular Automata Annealing (SCA) \cite{STATICA} motivated by the potential to make use of its parallelizability to provide solutions within a shorter amount of time as compared to SA. The authors showed that an appropriate cooling rate in the form $T_k \propto \frac{1}{\log(k)}$ is sufficient to guarantee the asymptotic convergence to ground states. Experimental results in \cite{SCA21, ISCIE2022, TISCIE2022, IEEE2023} showed that the SCA performed better than SA in searching for ground states in most of the considered scenarios, while its variation $\varepsilon$-SCA (or RPA) surpassed the performance of SCA and SA in all scenarios; however, there is still no mathematical foundation that supports the convergence of the $\varepsilon$-SCA.

The so-called Digital Annealer \cite{DA2019, MatsubaraDA, DA2017, DA3rd, Fujitsu2017}, developed by Fujitsu Laboratories, is a CMOS hardware designed to solve fully connected Quadratic Unconstrained Binary Optimization (QUBO) problems whose performance has been compared with other state-of-the-art dedicated solvers in terms of solution quality (success probabilities) and scalability (time to solution).
For an up-to-date, extensive review of several dedicated hardware solvers for the Ising model, including Fujitsu's Digital Annealer benchmarks against multiple well-known Ising machines, we highly recommend \cite{Mohseni2022}.
The first-generation Digital Annealer's Algorithm \cite{DA2019}, or simply DA, is a physics-inspired algorithm (motivated by SA) employed in the search for ground states of a Hamiltonian by relying on a parallel-trial scheme and an escape mechanism (called a \emph{dynamic offset}) as an attempt to increase spin-flip acceptance probabilities and avoid local minima.
In \cite{DA2019}, the DA was benchmarked against SA, PT (PT + ICM) \cite{Zhu2020, Zhu2015}, and PTDA \cite{DA2019}, considering sparse and fully connected Ising models: two-dimensional (with periodic boundary condition) and Sherrington-Kirkpatrick (SK) spin-glasses with bimodal and Gaussian disorder. For sparse models, the PT + ICM exhibited the lowest time to solution (TTS) with a clear scaling advantage; moreover, for sparse bimodal models, the parallel-trial scheme incorporated by the DA was not enough to decrease the TTS and provide better scaling than SA. On the other hand, the performance for fully connected models was significantly better, where the analysis of TTS for DA against SA, PT, and PTDA revealed a significant and consistent speedup of at least two orders of magnitude, and the DA achieved higher success rates and a small scaling advantage over SA.
In \cite{IEEE2023}, the authors benchmarked the DA against SA, SCA, and $\vep$-SCA, considering extensive series of models with different densities for the spin-spin interactions $J_{i,j} \in \{-1,0,1\}$. The authors obtained higher success probabilities for DA against SA and SCA in all instances, while $\vep$-SCA outperformed DA except in the cases where the non-null interactions were dense and anti-ferromagnetic. In \cite{TISCIE2022}, DA clearly outperformed SA, SCA, and $\vep$-SCA in obtaining solutions for the traveling salesman problem with higher success probability. 

 Although further iterations of the first-generation DA also rely on replica exchange methods \cite{DA3rd}, in this paper, we restrict ourselves to the mathematical description of the DA in the particular scenario where only the parallel search technique is regarded. Even though we restrict ourselves to such a particular case, this simple setting is mathematically interesting from the Markov chain theory point-of-view and has resulted in significant practical improvement over SA and other novel simulated annealing algorithms in several instances \cite{TISCIE2022,IEEE2023, MatsubaraDA, Mohseni2022}. We show that the equilibrium distribution corresponding to the finite fixed temperature Markov chain assumes a non-trivial form and does not necessarily coincide with the Gibbs-Boltzmann distribution. Moreover, based on the techniques developed by Hajek \cite{Hajek88}, we obtain a necessary and sufficient condition on the cooling schedule that guarantees asymptotic convergence of the algorithm to a ground state.

\section{Digital Annealer's Algorithm}\label{sec:DA}

This paper's main problem of interest concerns finding points of global minima, the so-called ground states, of a Hamiltonian function for a given Ising model on a graph. Let $G = (V, E)$ be an undirected graph with a finite vertex set, containing no multi-edges and no self-loops, and let us consider the set $\Omega = \{-1,+1\}^{V}$ of all spin configurations  $\sigmavec = (\sigma_{x})_{x \in V}$ whose spin values are either $+1$ or $-1$. 
Given a family of spin-spin coupling constants  $(J_{x,y})_{x,y\in V}$ which is symmetric (that is, $J_{x,y} = J_{y,x}$ for every pair $x,y$ in $V$) and satisfies
$J_{x,y}=0$ whenever $\{x,y\}\notin E$, and a family of local external fields 
$(h_x)_{x\in V}$, let the Hamiltonian  of the Ising model be the real-valued function $H$  defined on $\Omega$ given by
\begin{align}\label{eq:Hamiltonian}
H(\sigmavec)=-\frac12\sum_{x,y\in V}J_{x,y}\sigma_x\sigma_y-\sum_{x\in V}h_x
 \sigma_x
\end{align}
for each spin configuration $\sigmavec = (\sigma_x)_{x\in V}$. Then, the set of ground states of $H$ is defined by
\begin{equation}
    \text{GS} = \{\sigmavec \in \Omega: H(\sigmavec) = \min_{\tauvec \in \Omega} H(\tauvec)\}. 
\end{equation}

Before we proceed, let us introduce some notation. Given a configuration $\sigmavec$ and a vertex $x$, let us denote by $\sigmavec^{x}$ the spin configuration in $\Omega$ which coincides with $\sigmavec$ except for its spin value at $x$, more precisely, we define
\begin{equation}
(\sigmavec^{x})_{y} = 
\begin{cases}
-\sigma_{x} & \text{if $y = x$,} \\
\sigma_{y} & \text{otherwise.}
\end{cases}
\end{equation}
We define the energetic cost $E_{x}(\sigmavec)$ of flipping the spin value of $\sigmavec$ at $x$ by letting $E_{x}(\sigmavec) = H(\sigmavec^{x}) - H(\sigmavec)$, and denote by $E_{x}(\sigmavec)^{+}$ its positive part given by $E_{x}(\sigmavec)^{+} = \max\{E_{x}(\sigmavec), 0\}$. Note that  $E_{x}(\sigmavec)$ can be expressed in terms of the cavity field $\tilde{h}_{x}(\sigmavec)$, whose expression given by $\tilde{h}_{x}(\sigmavec)  = \sum_{y \in V} J_{x,y}\sigma_{y} + h_{x}$, in the form
\begin{equation}
E_{x}(\sigmavec) = 2 \tilde{h}_{x}(\sigmavec) \sigma_{x}.
\end{equation}

\begin{figure}
        \centering   
        \begin{subfigure}[b]{0.49\textwidth}
                \includegraphics[width=\textwidth]{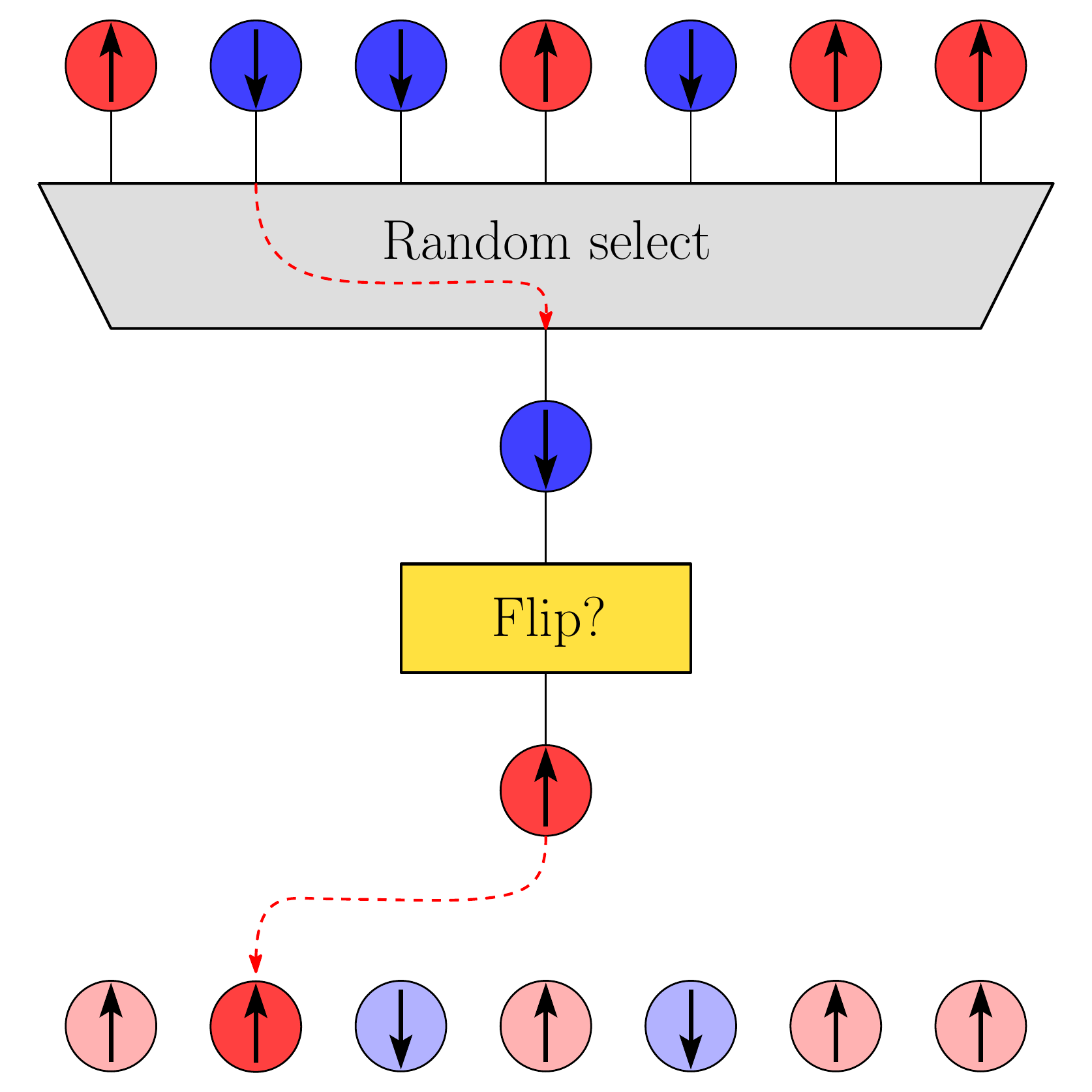}
                \caption{SA}\label{fig:SA}
        \end{subfigure}
        \begin{subfigure}[b]{0.49\textwidth}
                \includegraphics[width=\textwidth]{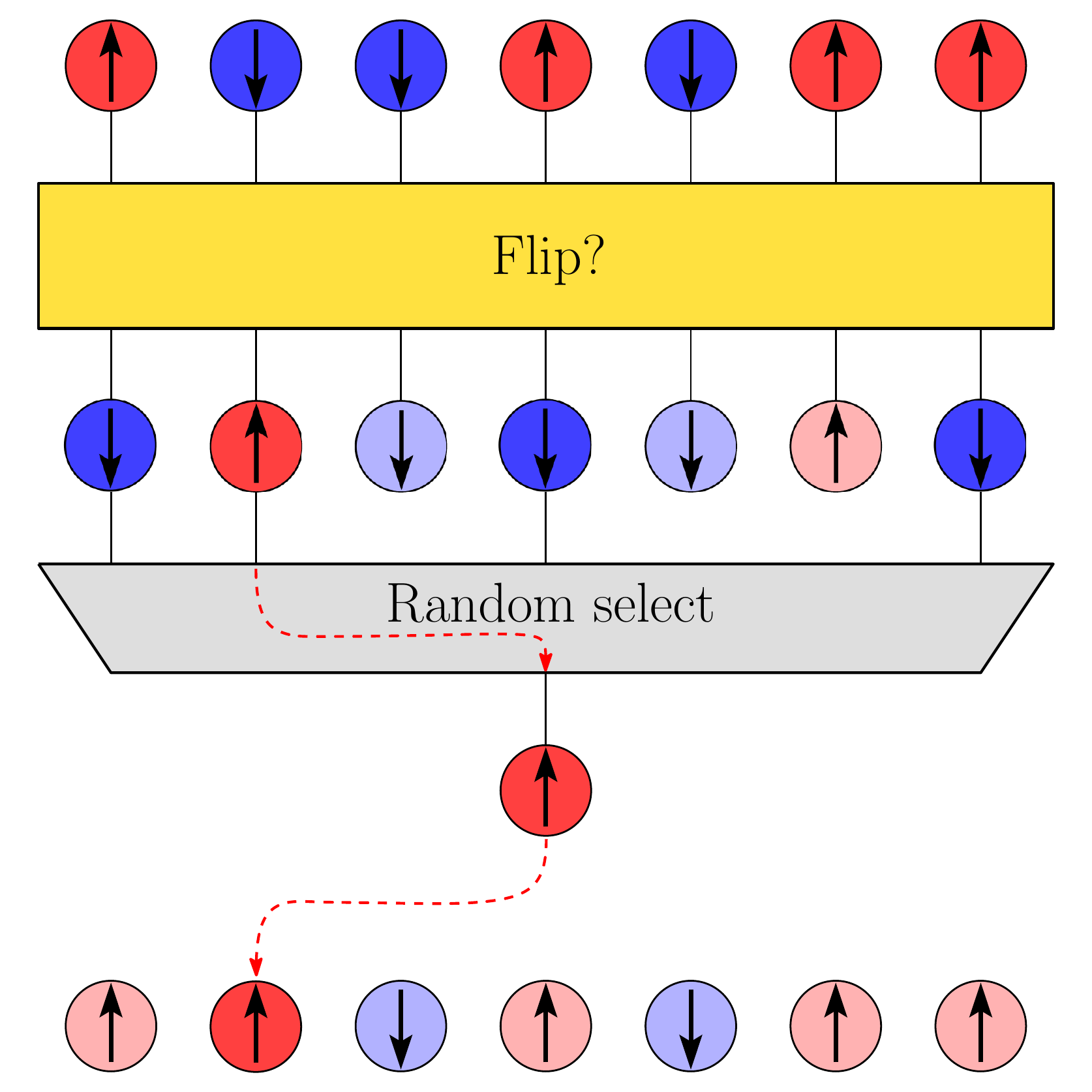}
                \caption{DA}\label{fig:DA}
        \end{subfigure}
    \caption{Comparison between the spin updates performed by SA and DA. We illustrate the spin update rules for these algorithms by decomposing them into two steps: the ``Random select'' phase and the ``Flip?'' phase. To help the reader visualize their operation, we represent the spin values $+1$ and $-1$ in red and blue, respectively, which, after being evaluated in the ``Flip?'' phase, are represented in transparent colors in case their new values are the same as in the original configuration. }
    \label{fig:SAvsDA}
\end{figure} 

The Digital Annealer's Algorithm works as follows. Given a state $X_{k-1} = \sigmavec$ at time $k-1$, we propose a parallel-trial where each vertex $x$ is assigned as eligible to flip its spin value according to the Metropolis criterion 
\begin{equation}\label{eq:metropoliscriterion}
\exp(-\beta_k E_x(\sigmavec)^+) = \min \{\exp(-\beta_k E_x(\sigmavec)), 1\}
\end{equation}
corresponding to temperature $1/{\beta_k}$. If the set $S$ of all eligible vertices contains at least one element, then a vertex $x$ is chosen uniformly at random from $S$, and we place $X_{k} = \sigmavec^{x}$; otherwise, nothing is done, and we consider $X_{k} = \sigmavec$. Note that, differently from \cite{MatsubaraDA, DA2017}, we are disregarding the escape mechanism so as to guarantee the Markov property holds. 
Although the SA and DA can flip at most one spin per update, they differ greatly in how they update spin configurations; see Figure \ref{fig:SAvsDA}. In Figure \ref{fig:SA}, given a spin configuration, we select one spin uniformly at random and decide whether we flip it or not according to Metropolis criterion (\ref{eq:metropoliscriterion}). In the end, the spin value is updated while the other spins remain unchanged. In Figure \ref{fig:DA}, differently from the SA, we first apply the ``Flip?'' phase, where we propose parallel spin-flips according to the Metropolis criterion (\ref{eq:metropoliscriterion}). Note that the only spins evaluated in the ``Random select'' phase are those represented in opaque colors, which correspond to the elements of the set $S$ of spins that accepted spin-flips, and we select only one of them, uniformly at random, to replace its corresponding value in the initial configuration.

According to practical simulations \cite{DA2019,TISCIE2022,IEEE2023,DA2017,Fujitsu2017}, it is expected that, by properly decreasing the temperature toward zero at each step, the updated configuration approaches a ground state. Therefore, a good approximation for a ground state can be achieved, provided we run the algorithm for a sufficiently large number of steps. In this paper, we prove Theorem \ref{DAannealing} to provide a mathematical foundation for these observations.

In our framework, the Digital Annealer's Algorithm transition matrix  $P_{\beta}^{\text{DA}}$ at inverse temperature $\beta \in (0,\infty)$ is defined by
\begin{equation}\label{eq:DAdef}
P_{\beta}^{\text{DA}}(\sigmavec, \tauvec) = 
\begin{cases} \displaystyle 
\sum_{\substack{S \subseteq V\\  S \ni x}} \frac{1}{|S|} \displaystyle \prod_{y \in S} e^{-\beta E_{y} (\sigmavec)^{+}} \displaystyle \prod_{y \in V \backslash S} (1-e^{-\beta E_{y} (\sigmavec) ^{+}}) & \text{if  $\tauvec = \sigmavec^{x}$ for some $x$ in $V$,} \\[20pt]

\displaystyle \prod_{y \in V} (1-e^{-\beta E_{y} (\sigmavec)^{+}})  & \text{if $\tauvec = \sigmavec$, and } \\[20pt]

0 & \text{otherwise}.
\end{cases}
\end{equation}
First, let us note that the spin-flip probability $P_{\beta}^{\text{DA}}(\sigmavec, \sigmavec^{x})$ can be decomposed in the form
\begin{equation}\label{eq:relation}
P_{\beta}^{\text{DA}}(\sigmavec, \sigmavec^{x}) = \frac{1}{|V|}  e^{-\beta E_{x}(\sigmavec)^{+}} 
+ \sum_{\substack{S \subseteq V\\  S \ni x}} \left(\frac{1}{|S|} - \frac{1}{|V|}\right) \displaystyle \prod_{y \in S} 
e^{-\beta E_{y}(\sigmavec)^{+}} \displaystyle \prod_{y \in V \backslash S} (1-e^{-\beta E_{y} (\sigmavec)^{+}}), 
\end{equation}
where the first term in the right-hand side of the equation (\ref{eq:relation}) coincides with the standard single site spin-flip Metropolis dynamics transition probability from $\sigmavec$ to $\sigmavec^{x}$.  Therefore, as discussed in \cite{DA2019}, the DA, in fact, provides us with a larger spin-flip rate compared to the Metropolis dynamics, which is generally attributed as a decisive property that can lead to greater effectiveness over SA.

The main result of this paper, Theorem \ref{DAannealing} stated below, provides us with a necessary and sufficient condition on the speed of convergence of the temperature $\beta_{k}^{-1}$ to zero so that the algorithm converges asymptotically to a ground state. Its proof is based on the techniques developed in \cite{Hajek88}, and it is presented in Section \ref{sec:proof}.

\begin{theorem}\label{DAannealing}
Let $(\beta_{k})_{k \geq 1}$ be a cooling schedule, that is, a nondecreasing sequence of positive real numbers such that  $\lim_{k \to \infty} \beta_{k}= + \infty$, and let  $(X_{k})_{k \geq 0}$ be the discrete-time inhomogeneous Markov chain satisfying
\begin{equation}\label{eq:Markov}
\mathbb{P}(X_{k} = \sigmavec_{k}| X_{k-1} = \sigmavec_{k-1}, \dots, X_{0} = \sigmavec_{0})
=\mathbb{P}(X_{k} = \sigmavec_{k}| X_{k-1} = \sigmavec_{k-1}) = P_{\beta_{k}}^{\text{DA}}(\sigmavec_{k-1}, \sigmavec_{k})
\end{equation}
for every positive integer $k$ and $\sigmavec_0, \dots, \sigmavec_k$ in $\Omega$. Then, there exists a constant $\gamma^{\ast}$ such that the limit
\begin{equation}\label{eq:limprob}
\lim_{k \to \infty}  \mathbb{P}(X_{k} \in \text{GS}) = 1
\end{equation}
holds if and only if
\begin{equation}\label{eq:suff_cond}
\sum_{k = 1}^{\infty} e^{-\beta_{k}\gamma^{\ast}} = +\infty.
\end{equation}
\end{theorem}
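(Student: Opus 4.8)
\emph{Proof strategy.} The plan is to place the chain $(X_{k})_{k\ge0}$ inside the framework of Hajek's annealing theorem \cite{Hajek88}, whose hypotheses are a fixed, connected, weakly reversible communication structure together with up-hill transition probabilities that are exponentially small in the energy increment. The only step going beyond \cite{Hajek88} is that $P_{\beta}^{\text{DA}}$ is not of the textbook Metropolis form $q_{\sigmavec,\tauvec}\,e^{-\beta(H(\tauvec)-H(\sigmavec))^{+}}$; what I will prove instead is a uniform two-sided bound of this shape. Discarding the nonnegative correction term in \refeq{relation} gives $P_{\beta}^{\text{DA}}(\sigmavec,\sigmavec^{x})\ge|V|^{-1}e^{-\beta E_{x}(\sigmavec)^{+}}$, while in \refeq{DAdef} each summand is at most $e^{-\beta E_{x}(\sigmavec)^{+}}$ (bound $1/|S|\le1$, bound every factor $1-e^{-\beta E_{y}(\sigmavec)^{+}}$ by $1$, and among $\prod_{y\in S}e^{-\beta E_{y}(\sigmavec)^{+}}$ keep only the $y=x$ factor), and there are $2^{|V|-1}$ subsets $S\ni x$; hence, with $A:=2^{|V|}$,
\[
A^{-1}e^{-\beta E_{x}(\sigmavec)^{+}}\ \le\ P_{\beta}^{\text{DA}}(\sigmavec,\sigmavec^{x})\ \le\ A\,e^{-\beta E_{x}(\sigmavec)^{+}}\qquad\bigl(\beta\in(0,\infty),\ \sigmavec\in\Omega,\ x\in V\bigr),
\]
and $E_{x}(\sigmavec)^{+}=(H(\sigmavec^{x})-H(\sigmavec))^{+}$ by definition.

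Next I would verify the structural hypotheses. Since $P_{\beta}^{\text{DA}}(\sigmavec,\tauvec)=0$ unless $\tauvec=\sigmavec$ or $\tauvec=\sigmavec^{x}$, and $P_{\beta}^{\text{DA}}(\sigmavec,\sigmavec^{x})>0$ always, the communication graph at every temperature is the hypercube on $\Omega$ (two configurations adjacent when they differ in exactly one coordinate), which is connected, and $P_{\beta}^{\text{DA}}(\sigmavec,\sigmavec^{x})>0\Leftrightarrow P_{\beta}^{\text{DA}}(\sigmavec^{x},\sigmavec)>0$, so the chain is weakly reversible. Following \cite{Hajek88}, call $\tauvec$ \emph{reachable from $\sigmavec$ at height $h$} if there is a sequence of single spin flips from $\sigmavec$ to $\tauvec$ along which $H$ never exceeds $h$; call $\sigmavec$ a \emph{local minimum} if $H(\sigmavec)\le H(\sigmavec^{x})$ for all $x$; and define the \emph{depth} $d(\sigmavec)$ of a local minimum $\sigmavec\notin\text{GS}$ as the smallest $h>0$ such that some $\tauvec$ with $H(\tauvec)<H(\sigmavec)$ is reachable from $\sigmavec$ at height $H(\sigmavec)+h$. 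The constant in the statement is then
\[
\gamma^{\ast}\ :=\ \max\bigl\{\,d(\sigmavec)\ :\ \sigmavec\ \text{a local minimum},\ \sigmavec\notin\text{GS}\,\bigr\}
\]
(with the convention that this maximum is $0$ when there is no non-optimal local minimum, in which case \refeq{suff_cond} holds trivially).

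With these in hand, I would run Hajek's argument, checking at each step that it invokes only the two-sided bound above and not an exact transition formula. For the \emph{sufficiency} of \refeq{suff_cond}: every ``cup'' (a set of states all mutually reachable from one another without $H$ exceeding some common threshold) that is disjoint from $\text{GS}$ has depth $d\le\gamma^{\ast}$, so $\sum_{k}e^{-\beta_{k}\gamma^{\ast}}=\infty$ forces $\sum_{k}e^{-\beta_{k}d}=\infty$; Hajek's lower estimates on cup-exit probabilities — which need only $P_{\beta_{k}}^{\text{DA}}(\sigmavec,\sigmavec^{x})\ge A^{-1}e^{-\beta_{k}E_{x}(\sigmavec)^{+}}$ — then show that $(X_{k})$ a.s.\ leaves every cup disjoint from $\text{GS}$ infinitely often, and a recursion over the cup hierarchy upgrades this to $\mP(X_{k}\in\text{GS})\to1$. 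For the \emph{necessity}: fix a local minimum $\sigmavec^{\ast}\notin\text{GS}$ with $d(\sigmavec^{\ast})=\gamma^{\ast}$ and let $C$ be the cup of states reachable from $\sigmavec^{\ast}$ below height $H(\sigmavec^{\ast})+\gamma^{\ast}$, so that $C\cap\text{GS}=\emptyset$. Hajek's confinement estimates — which use $P_{\beta_{k}}^{\text{DA}}(\sigmavec,\sigmavec^{x})\le A\,e^{-\beta_{k}E_{x}(\sigmavec)^{+}}$ — give that, when $\sum_{k}e^{-\beta_{k}\gamma^{\ast}}<\infty$, the probability that the chain ever leaves $C$ after time $m$ given $X_{m}=\sigmavec^{\ast}$ tends to $0$ as $m\to\infty$. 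Choosing $m$ so that this probability is below $1/2$ yields $\mP(X_{k}\in C)\ge\tfrac12\mP(X_{m}=\sigmavec^{\ast})>0$ for all $k\ge m$, hence $\limsup_{k}\mP(X_{k}\notin\text{GS})>0$ and \refeq{limprob} fails.

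The difficulty I anticipate is not the two-sided bound — that is immediate from \refeq{relation}--\refeq{DAdef} — but the faithful transcription of Hajek's recursive cup decomposition and the accompanying excursion and confinement estimates to this slightly more general setting: his sharp bounds are derived from the exact product form $q_{\sigmavec,\tauvec}e^{-\beta_{k}(H(\tauvec)-H(\sigmavec))^{+}}$, and one must check that they persist when the prefactor is only pinched between $A^{-1}$ and $A$ and, moreover, is itself $\beta_{k}$-dependent, controlling the constants $A$ accumulated along paths of bounded length uniformly in $k$. The cleanest route — and the one I would take — is to isolate a self-contained ``generalized Hajek theorem'' for finite-state time-inhomogeneous chains whose one-step transitions obey two-sided exponential bounds over a fixed connected, weakly reversible communication graph, prove that once, and then apply it with the bound established above, reading off $\gamma^{\ast}$ as the maximal depth.
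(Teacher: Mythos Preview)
Your approach is correct and is essentially the paper's: establish the two-sided bound on $P_{\beta}^{\text{DA}}(\sigmavec,\sigmavec^{x})$, verify irreducibility and weak reversibility of the hypercube neighbourhood structure, and invoke Hajek.

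The one substantive remark is that the difficulty you anticipate is illusory. Hajek's Theorem~2 in \cite{Hajek88} (restated in the paper as Theorem~\ref{DAannealing2}) is \emph{already} formulated for kernels satisfying only
\[
c_{1}\,\lambda^{[H(\tauvec)-H(\sigmavec)]^{+}}\ \le\ Q(\sigmavec,\tauvec,t,\lambda)\ \le\ c_{2}\,\lambda^{[H(\tauvec)-H(\sigmavec)]^{+}}
\]
with arbitrary positive constants $c_{1},c_{2}$ and a $(t,\lambda)$-dependent kernel; no exact Metropolis product form appears in his hypotheses, and his cup-excursion and confinement estimates are proved directly from these bounds. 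So there is no ``generalized Hajek theorem'' to isolate and prove. The paper simply embeds the discrete chain in Hajek's continuous-time framework by taking deterministic unit-time jumps $\Phi(t,s,\sigmavec)=\indic_{\{t\ge s+1\}}$, setting $\lambda_{t}=e^{-\beta_{\lfloor t\rfloor}}$ and $Q(\cdot,\cdot,t,\lambda)=P_{\log\lambda^{-1}}^{\text{DA}}$, checks the two-sided bound with $c_{1}=1/|V|$, $c_{2}=1$, and reads off the conclusion. Incidentally, your upper constant $2^{|V|-1}$ is correct but loose: factoring as in \refeq{relation2}, the prefactor $R(\sigmavec,\sigmavec^{x})$ is a convex combination of numbers in $(0,1]$ summing over a product-Bernoulli partition of $V\setminus\{x\}$, hence $R(\sigmavec,\sigmavec^{x})\le 1$ and one may take $c_{2}=1$.
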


As we will precise more and prove in Section \ref{sec:proof}, the constant $\gamma^{\ast}$ from Theorem \ref{DAannealing} coincides with the depth of the deepest local minimum that is not a ground state. Note that, in particular, if we consider a logarithmic cooling schedule $(\beta_{k})_{k \geq 1}$ in the form
\begin{equation}
\beta_{k} = \frac{1}{\gamma} \log(k)
\end{equation}
for some positive parameter $\gamma$, then, equation (\ref{eq:limprob}) holds if and only if $\gamma \geq \gamma^{\ast}$. 
Let us note that since the necessary and sufficient condition (\ref{eq:suff_cond}) is the same as the one proven for the SA in \cite{Hajek88}, this result does not imply that the DA performs better than SA but only guarantees its convergence. In one possible attempt to provide rigorous statements about the superiority of one algorithm over another, it would be necessary to weaken the notion of convergence and derive results considering cooling schedules for finite-time simulations. Since each algorithm has its strengths and weaknesses and its performance is problem-dependent \cite{Mohseni2022}, such results are still challenging to derive and are out of the scope of the current state of our investigations.

\section{Stationary distributions}

The standard method used for proving the convergence of simulated annealing algorithms relies on obtaining the properties of weak and strong ergodicity for inhomogeneous Markov chains and on the analysis of their stationary distributions at fixed temperatures, see \cite{AK1989, b99, SM76, Senata81}. A mathematical foundation for the convergence of SA can be found in \cite{AK1989, b99}. Following the same idea, the convergence of an algorithm based on probabilistic cellular automata, namely the SCA, was obtained in \cite{SCA21}. In this section, we show that, differently from the Metropolis dynamics and the SCA, the stationary distribution for the DA transition probability does not necessarily assume a general formula or coincide with the Gibbs distribution. For that reason, in Section \ref{sec:proof} we adopt a different approach to prove convergence which does not depend upon the knowledge of the stationary distribution.

First, let us note that  $P_{\beta}^{\text{DA}}$ is an irreducible and aperiodic transition matrix. Thus, the discrete-time homogeneous Markov chain $(X_{k})_{k \geq 0}$ determined by $P_{\beta}^{\text{DA}}$ converges to its (unique) stationary distribution $\pi_{\beta}^{\text{DA}}$. In this section, we provide some examples where it is possible to determine the stationary distribution $\pi_{\beta}^{\text{DA}}$ of $P_{\beta}^{\text{DA}}$ and show that, for some particular cases, it differs from the Gibbs distribution $\pi_{\beta}^{\text{G}}$ defined by
\begin{equation}
\pi_{\beta}^{\text{G}}(\sigmavec) = \frac{e^{-\beta H(\sigmavec)}}{Z_{\beta}}
\end{equation}
for each configuration $\sigmavec$ in $\Omega$, where the normalizing factor $Z_{\beta} = \sum_{\tauvec \in \Omega} e^{-\beta H(\tauvec)}$ is known as the partition function.

\begin{example}\label{ex:2vertices}
Let us consider the simplest case where $V$ consists of only two vertices and the $H$ is the Hamiltonian of a ferromagnetic Ising model without external fields with pairwise interaction equal  $J \geq 0$. Let $\pi_{\beta}^{\text{DA}} = (\pi_{\beta}^{\text{DA}}({\uparrow \uparrow}), \pi_{\beta}^{\text{DA}}({\uparrow \downarrow}), \pi_{\beta}^{\text{DA}}({\downarrow \uparrow}), \pi_{\beta}^{\text{DA}}({\downarrow \downarrow}))$ be the stationary distribution of the transition matrix $P_{\beta}^{\text{DA}}$, which is expressed as the $4 \times 4$ matrix given by

\begin{equation} 
P_{\beta}^{\text{DA}} = 
\begin{pmatrix}  (1- e^{-2\beta J})^2 & e^{-2\beta J}(1-\frac12 e^{-2\beta J})  & e^{-2\beta J}(1-\frac12 e^{-2\beta J}) & 0 \\[5pt]
\frac{1}{2} & 0 & 0 &\frac{1}{2} \\[5pt]
\frac{1}{2} & 0 & 0 &\frac{1}{2} \\[5pt]
0 & e^{-2\beta J}(1-\frac12 e^{-2\beta J}) & e^{-2\beta J}(1-\frac12 e^{-2\beta J}) &  (1- e^{-2\beta J})^2 \\ 
\end{pmatrix}.
\end{equation}
It follows that $\pi_{\beta}^{\text{DA}}({\uparrow \uparrow}) =  \pi_{\beta}^{\text{DA}}({\downarrow \downarrow})$, $\pi_{\beta}^{\text{DA}}({\uparrow \downarrow}) = \pi_{\beta}^{\text{DA}}({\downarrow \uparrow})$, and 
\begin{equation}
\pi_{\beta}^{\text{DA}}({\uparrow \uparrow}) 
= \frac{e^{-\beta H(\uparrow \uparrow)}}{Z_{\beta} + 2 e^{-\beta J} (1 - e^{-2\beta J})} 
\end{equation}
and 
\begin{equation}
\pi_{\beta}^{\text{DA}}({\uparrow \downarrow}) 
= \frac{e^{-\beta H(\uparrow \downarrow)} +  e^{-\beta J} (1 - e^{-2\beta J})}{Z_{\beta} + 2 e^{-\beta J} (1 - e^{-2\beta J})}. 
\end{equation}
It is straightforward to show that $\pi_{\beta}^{\text{DA}}$ coincides with $\pi_{\beta}^{\text{G}}$ if and only if $J = 0$, moreover, $\pi_{\beta}^{\text{DA}}$ converges to the uniform distribution concentrated on the ground states $\uparrow \uparrow$ and $\downarrow \downarrow$ as $\beta$ tends to infinity.  $\QED$
\end{example}

 It follows from equation (\ref{eq:DAdef}) that the spin-flip probability $P_{\beta}^{\text{DA}}(\sigmavec, \sigmavec^{x})$ can also be expressed as
\begin{equation}\label{eq:relation2}
P_{\beta}^{\text{DA}}(\sigmavec, \sigmavec^{x}) = R(\sigmavec, \sigmavec^{x}) e^{-\beta E_{x}(\sigmavec)^{+}}, 
\end{equation}
where $R(\sigmavec, \sigmavec^{x})$ is defined by
\begin{equation}
R(\sigmavec, \sigmavec^{x}) = \sum_{\substack{S \subseteq V \setminus \{x\}}}\frac{1}{|S| + 1}  \displaystyle \prod_{y \in S} 
e^{-\beta E_{y}(\sigmavec)^{+}} \displaystyle \prod_{y \in (V \setminus \{x\}) \setminus S} (1-e^{-\beta E_{y} (\sigmavec)^{+}}). 
\end{equation}

\begin{prop}\label{prop:GDA}
Let $\pi_{\beta}^{\text{G}}$ be the Gibbs distribution on $\Omega$ at inverse temperature $\beta$. Then, it follows that 
\begin{equation}\label{eq:stationary}
(\pi_{\beta}^{\text{G}}P_{\beta}^{\text{DA}}) (\sigmavec) - \pi_{\beta}^{\text{G}}(\sigmavec) = \sum_{x \in V} e^{-\beta E_{x}(\sigmavec)^{+}} (R(\sigmavec^{x}, \sigmavec) - R(\sigmavec, \sigmavec^{x}))
\end{equation}
holds for every $\sigmavec$ in $\Omega$.
\end{prop}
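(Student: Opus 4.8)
The plan is to compute the left-hand side directly from $(\pi_\beta^{\text{G}}P_\beta^{\text{DA}})(\sigmavec) = \sum_{\tauvec\in\Omega}\pi_\beta^{\text{G}}(\tauvec)\,P_\beta^{\text{DA}}(\tauvec,\sigmavec)$ and to exploit the fact that $P_\beta^{\text{DA}}$ connects only configurations differing in at most one site. Concretely, $P_\beta^{\text{DA}}(\tauvec,\sigmavec)$ vanishes unless $\tauvec=\sigmavec$ or $\tauvec=\sigmavec^{x}$ for some $x\in V$, so only $|V|+1$ terms survive. For the off-diagonal terms I would rewrite $P_\beta^{\text{DA}}(\sigmavec^{x},\sigmavec)$ using the factorization \eqref{eq:relation2} applied to the base configuration $\sigmavec^{x}$ (whose $x$-flip is $\sigmavec$), giving $P_\beta^{\text{DA}}(\sigmavec^{x},\sigmavec)=R(\sigmavec^{x},\sigmavec)\,e^{-\beta E_{x}(\sigmavec^{x})^{+}}$.

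The second ingredient is the Metropolis detailed-balance identity $\pi_\beta^{\text{G}}(\sigmavec)\,e^{-\beta E_{x}(\sigmavec)^{+}}=\pi_\beta^{\text{G}}(\sigmavec^{x})\,e^{-\beta E_{x}(\sigmavec^{x})^{+}}$ for every $x\in V$. This follows from the definitions: $E_{x}(\sigmavec)=H(\sigmavec^{x})-H(\sigmavec)=-E_{x}(\sigmavec^{x})$, and a short case check on the sign of $E_{x}(\sigmavec)$ shows that both sides equal $e^{-\beta\max\{H(\sigmavec),H(\sigmavec^{x})\}}/Z_{\beta}$. Using this to replace each $\pi_\beta^{\text{G}}(\sigmavec^{x})\,e^{-\beta E_{x}(\sigmavec^{x})^{+}}$ by $\pi_\beta^{\text{G}}(\sigmavec)\,e^{-\beta E_{x}(\sigmavec)^{+}}$ turns the contribution of the states $\sigmavec^{x}$ into $\pi_\beta^{\text{G}}(\sigmavec)\sum_{x\in V}R(\sigmavec^{x},\sigmavec)\,e^{-\beta E_{x}(\sigmavec)^{+}}$, in which the dependence on the neighbouring configurations is confined to the factors $R(\sigmavec^{x},\sigmavec)$.

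For the diagonal term I would observe that reindexing $S\mapsto S\cup\{x\}$ in the definition of $R(\sigmavec,\sigmavec^{x})$ recovers $e^{-\beta E_{x}(\sigmavec)^{+}}R(\sigmavec,\sigmavec^{x})=P_\beta^{\text{DA}}(\sigmavec,\sigmavec^{x})$; since the rows of $P_\beta^{\text{DA}}$ sum to one, this yields $P_\beta^{\text{DA}}(\sigmavec,\sigmavec)=\prod_{y\in V}(1-e^{-\beta E_{y}(\sigmavec)^{+}})=1-\sum_{x\in V}e^{-\beta E_{x}(\sigmavec)^{+}}R(\sigmavec,\sigmavec^{x})$ (equivalently, expand the product and use $\sum_{T\subseteq V}\prod_{y\in T}a_{y}\prod_{y\notin T}(1-a_{y})=1$ with $a_{y}=e^{-\beta E_{y}(\sigmavec)^{+}}$). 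Substituting the two previous steps into the expansion of $(\pi_\beta^{\text{G}}P_\beta^{\text{DA}})(\sigmavec)$, the term $\pi_\beta^{\text{G}}(\sigmavec)$ separates off and the remaining terms assemble, up to the overall Gibbs weight $\pi_\beta^{\text{G}}(\sigmavec)$, into $\sum_{x\in V}e^{-\beta E_{x}(\sigmavec)^{+}}\left(R(\sigmavec^{x},\sigmavec)-R(\sigmavec,\sigmavec^{x})\right)$, which is the asserted identity.

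The argument is essentially bookkeeping, and I expect no serious obstacle; the two points that require attention are (i) invoking \eqref{eq:relation2} with the correct base configuration, so that the positive part $E_{x}(\sigmavec^{x})^{+}$ — rather than $E_{x}(\sigmavec)^{+}$ — appears before detailed balance is applied, and (ii) refraining from any symmetrization of $R$, since it is precisely the asymmetry $R(\sigmavec,\sigmavec^{x})\neq R(\sigmavec^{x},\sigmavec)$ that produces the right-hand side and that, by \eqref{eq:stationary}, quantifies the failure of $\pi_\beta^{\text{G}}$ to be stationary for $P_\beta^{\text{DA}}$. The detailed-balance identity is the only place where the specific exponential form of $\pi_\beta^{\text{G}}$ enters, so I would regard it as the conceptual core of the proof, notwithstanding that its verification is a two-line case distinction.
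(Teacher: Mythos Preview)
Your argument is correct and is essentially the paper's own: both use the factorization \eqref{eq:relation2} together with Metropolis detailed balance, the only cosmetic difference being that the paper routes the diagonal term through the Metropolis kernel $P_\beta^{\text{G}}$ (writing $\pi_\beta^{\text{G}}=\pi_\beta^{\text{G}}P_\beta^{\text{G}}$ and letting the resulting $1/|V|$ terms cancel) while you use row-stochasticity of $P_\beta^{\text{DA}}$ directly. Both computations, incidentally, produce an overall factor $\pi_\beta^{\text{G}}(\sigmavec)$ on the right-hand side of \eqref{eq:stationary}, exactly as you flag.
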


\begin{proof}
Let $P_{\beta}^{\text{G}}$ denote the single site spin-flip Metropolis dynamics transition matrix at inverse temperature $\beta$ whose transition probability from $\sigmavec$ to $\sigmavec^{x}$ is given by $P_{\beta}^{\text{G}}(\sigmavec, \sigmavec^{x}) = \frac{1}{|V|} e^{-\beta E_{x}(\sigmavec)^{+}}$. Let us begin by noting that identities 
\begin{equation}
(\pi_{\beta}^{\text{G}}P_{\beta}^{\text{DA}}) (\sigmavec) = \pi_{\beta}^{\text{G}}(\sigmavec)  P_{\beta}^{\text{DA}}(\sigmavec, \sigmavec) + \sum_{x \in V} \pi_{\beta}^{\text{G}}(\sigmavec^{x})P_{\beta}^{\text{DA}} (\sigmavec^{x},\sigmavec)
\end{equation}
and
\begin{equation}
\pi_{\beta}^{\text{G}}(\sigmavec) = (\pi_{\beta}^{\text{G}}P_{\beta}^{\text{G}}) (\sigmavec) = \pi_{\beta}^{\text{G}}(\sigmavec)  P_{\beta}^{\text{G}}(\sigmavec, \sigmavec) + \sum_{x \in V} \pi_{\beta}^{\text{G}}(\sigmavec^{x})P_{\beta}^{\text{G}} (\sigmavec^{x},\sigmavec)
\end{equation}
imply
\begin{align}
(\pi_{\beta}^{\text{G}}P_{\beta}^{\text{DA}}) (\sigmavec)  - \pi_{\beta}^{\text{G}}(\sigmavec) =
 & \;\pi_{\beta}^{\text{G}}(\sigmavec) \left(P_{\beta}^{\text{DA}}(\sigmavec, \sigmavec) - P_{\beta}^{\text{G}}(\sigmavec, \sigmavec) \right) \\ 
 & + \sum_{x \in V} \pi_{\beta}^{\text{G}}(\sigmavec^{x}) \left(P_{\beta}^{\text{DA}}(\sigmavec^{x}, \sigmavec) - P_{\beta}^{\text{G}}(\sigmavec^{x}, \sigmavec) \right). \nonumber
 \end{align}
Now, by summing both sides of equation (\ref{eq:relation2}) over all vertices of $V$, we obtain
\begin{equation}\label{eq:GDA}
P_{\beta}^{\text{G}}(\sigmavec, \sigmavec) - P_{\beta}^{\text{DA}}(\sigmavec, \sigmavec) = 
\sum_{x \in V} \left(R(\sigmavec, \sigmavec^{x}) - \frac{1}{|V|}\right) e^{-\beta E_{x}(\sigmavec)^{+}}, 
\end{equation}
and, by exchanging the roles of $\sigmavec$ and $\sigmavec^{x}$ in equation (\ref{eq:relation2}), we have
\begin{equation}
P_{\beta}^{\text{DA}}(\sigmavec^{x}, \sigmavec) - P_{\beta}^{\text{G}}(\sigmavec^{x}, \sigmavec) =  
 \left(R(\sigmavec^{x}, \sigmavec) - \frac{1}{|V|}\right) e^{-\beta E_{x}(\sigmavec^{x})^{+}}.
\end{equation}
Therefore, since the Gibbs distribution satisfies the detailed balance equations for the Metropolis dynamics, i.e., $\pi_{\beta}^{\text{G}}(\sigmavec) e^{-\beta E_{x}(\sigmavec)^{+}} = \pi_{\beta}^{\text{G}}(\sigmavec^{x}) e^{-\beta E_{x}(\sigmavec^{x})^{+}}$, we derive the identity 
\begin{equation}
\pi_{\beta}^{\text{G}}P_{\beta}^{\text{DA}} (\sigmavec)  - \pi_{\beta}^{\text{G}}(\sigmavec)
 = \pi_{\beta}^{\text{G}}(\sigmavec) \left[P_{\beta}^{\text{DA}} (\sigmavec,\sigmavec) - P_{\beta}^{\text{G}} (\sigmavec,\sigmavec)
+ \sum_{x \in V} \left(R(\sigmavec^{x}, \sigmavec) - \frac{1}{|V|}\right) e^{-\beta E_{x}(\sigmavec)^{+}}\right],
\end{equation}
which combined with equation (\ref{eq:GDA}), implies equation (\ref{eq:stationary}).
\end{proof}

\begin{example}
Let us consider a Hamiltonian which is free of pairwise interactions, subject only to external fields, that is, the Hamiltonian $H$ is given in the form
\begin{equation}
H(\sigmavec)=-\sum_{x\in V}h_x \sigma_x.
\end{equation}
Note that, given a spin configuration $\sigmavec$, the identity $E_{y}(\sigmavec) = E_{y}(\sigmavec^{x})$ holds whenever $x$ and $y$ are distinct vertices of $V$, therefore, we have $R(\sigmavec, \sigmavec^{x}) = R(\sigmavec^{x}, \sigmavec)$ for each vertex $x$.  It follows from Proposition \ref{prop:GDA} that $\pi_{\beta}^{\text{G}}P_{\beta}^{\text{DA}} = \pi_{\beta}^{\text{G}}$, and consequently, $\pi_{\beta}^{\text{DA}} = \pi_{\beta}^{\text{G}}$. 
$\QED$
\end{example}

In the following proposition, we provide a  generalization of Example \ref{ex:2vertices}.
\begin{prop}
Let $H$ be the Hamiltonian of a ferromagnetic Ising model without external fields, which is given by
\begin{equation}
H(\sigmavec)=-\frac12\sum_{x,y\in V}J_{x,y}\sigma_x\sigma_y,
\end{equation}
where each pairwise interaction $J_{x,y}$ is a nonnegative real number.  It follows that $\pi_{\beta}^{\text{DA}} = \pi_{\beta}^{\text{G}}$ if and only if $J_{x,y}=0$ for all $x,y$ in $V$. 
\end{prop}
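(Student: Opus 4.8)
The plan is to prove both directions separately, with the ``if'' direction being immediate and the ``only if'' direction requiring the real work. For the easy direction, note that if all $J_{x,y}=0$ then $H\equiv 0$, so $\pi_\beta^{\text{G}}$ is the uniform distribution; but in that case $E_y(\sigmavec)=0$ for all $y$ and $\sigmavec$, so $P_\beta^{\text{DA}}(\sigmavec,\sigmavec^x)=0$ for all $x$ (every $y$ fails the Metropolis test with probability $1-e^0=0$, hence $S=\vno$ almost surely), meaning $P_\beta^{\text{DA}}$ is the identity matrix and every distribution — in particular the uniform one — is stationary. (Alternatively, invoke the preceding Example with all $h_x=0$.)

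For the ``only if'' direction I would argue by contrapositive: assuming some $J_{x,y}>0$, I want to exhibit a configuration $\sigmavec$ for which the right-hand side of \refeq{stationary} is nonzero, which by Proposition \ref{prop:GDA} shows $\pi_\beta^{\text{G}} P_\beta^{\text{DA}}\neq\pi_\beta^{\text{G}}$, hence $\pi_\beta^{\text{DA}}\neq\pi_\beta^{\text{G}}$ by uniqueness of the stationary distribution. By \refeq{stationary} it suffices to find $\sigmavec$ with
\begin{equation}
\sum_{x\in V} e^{-\beta E_x(\sigmavec)^+}\bigl(R(\sigmavec^x,\sigmavec)-R(\sigmavec,\sigmavec^x)\bigr)\neq 0.
\end{equation}
The natural candidate is a ground state, say the all-$+1$ configuration $\sigmavec=(+1)_{x\in V}$ (a ground state since all $J_{x,y}\geq 0$ and there are no fields). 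At such a $\sigmavec$, flipping any spin $x$ costs $E_x(\sigmavec)=2\tilde h_x(\sigmavec)=2\sum_y J_{x,y}\geq 0$, and this is strictly positive for at least one $x$ (pick $x$ incident to an edge with $J_{x,y}>0$). In the reversed configuration $\sigmavec^x$, the cavity fields at the other vertices $y\neq x$ change by $-2J_{x,y}\sigma_x=-2J_{x,y}$, so $E_y(\sigmavec^x)=E_y(\sigmavec)-4J_{x,y}$; these will generically differ from $E_y(\sigmavec)$ precisely when $J_{x,y}>0$. Because $R(\sigmavec,\sigmavec^x)$ depends on the multiset $\{E_y(\sigmavec)^+:y\neq x\}$ while $R(\sigmavec^x,\sigmavec)$ depends on $\{E_y(\sigmavec^x)^+:y\neq x\}$, and these weighted sums of products $\frac{1}{|S|+1}\prod\cdots$ are strictly monotone in each argument, the two $R$-values will differ; the sign of the discrepancy can be controlled because flipping a spin away from the all-$+1$ ground state tends to \emph{decrease} the neighbors' flip costs (moving toward their own flips being favorable), making $R(\sigmavec^x,\sigmavec)>R(\sigmavec,\sigmavec^x)$. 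Summing the strictly-one-signed terms over $x$ then gives a nonzero total.

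The main obstacle is ruling out accidental cancellation: a priori the summands for different vertices $x$, and even within the comparison of $R(\sigmavec^x,\sigmavec)$ versus $R(\sigmavec,\sigmavec^x)$, could conspire to cancel. Two ways to handle this. The cleanest is to pick the \emph{specific} configuration minimizing the number of active neighbors, or to exploit a monotonicity/FKG-type argument: at the all-$+1$ ground state, for every $x$ one has $E_y(\sigmavec^x)\le E_y(\sigmavec)$ for all $y\neq x$ with strict inequality for some $y$ as soon as $x$ has a positive-weight incident edge, so $E_y(\sigmavec^x)^+\le E_y(\sigmavec)^+$ termwise, hence $R(\sigmavec^x,\sigmavec)\ge R(\sigmavec,\sigmavec^x)$ with strict inequality whenever $x$ is incident to a positive edge and the inequality actually propagates to the positive parts (one must check the positive-part truncation does not kill it — it does not, because at a ground state $E_y(\sigmavec)\ge 0$ already, so $E_y(\sigmavec)^+=E_y(\sigmavec)$ and the strict decrease survives unless $E_y(\sigmavec^x)$ becomes negative, in which case $E_y(\sigmavec^x)^+=0<E_y(\sigmavec)^+$, still strict). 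Thus every summand is $\ge 0$ and at least one is $>0$, so no cancellation is possible and the sum is strictly positive. Alternatively, if one prefers to avoid the monotonicity bookkeeping, a short direct computation on a two-vertex ``active cluster'' — reducing to Example \ref{ex:2vertices} by noting only the neighbors of $x$ with $J_{x,y}>0$ matter — also works and is arguably more transparent for the referee.
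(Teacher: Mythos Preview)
Your overall strategy for the ``only if'' direction matches the paper's: pick the all-$+1$ ground state, use $E_y(\sigmavec^x)=E_y(\sigmavec)-4J_{x,y}$ for $y\neq x$, and argue by monotonicity that every term $R(\sigmavec^x,\sigmavec)-R(\sigmavec,\sigmavec^x)$ in \refeq{stationary} has a common sign with at least one strict, so the sum is nonzero. The paper phrases the monotonicity via the probabilistic representation $R(\sigmavec,\sigmavec^x)=\mathbb{E}\bigl[1/\sum_i X_i \,\big|\, X_x=1\bigr]$ with independent Bernoulli $X_i$, but the content is the same as your termwise comparison.

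Two correctable slips. First, in the ``if'' direction your parenthetical is wrong: when every $E_y(\sigmavec)=0$, each vertex \emph{passes} the Metropolis test with probability $e^0=1$, so $S=V$ (not $S=\vno$) almost surely and $P_\beta^{\text{DA}}(\sigmavec,\sigmavec^x)=1/|V|$; the chain is the simple random walk on the hypercube, not the identity. The conclusion survives (uniform is stationary for this doubly-stochastic matrix), and your alternative via the preceding example is exactly what the paper invokes.

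Second, you have the monotonicity direction reversed. Lower flip costs $E_y(\sigmavec^x)^+\le E_y(\sigmavec)^+$ give \emph{higher} acceptance probabilities $e^{-\beta E_y(\sigmavec^x)^+}$, so $S$ is stochastically \emph{larger} in the $\sigmavec^x$-picture, and hence $1/(|S|+1)$ is stochastically smaller; this yields $R(\sigmavec^x,\sigmavec)\le R(\sigmavec,\sigmavec^x)$, the opposite of what you wrote. Your argument is unharmed: all summands in \refeq{stationary} are then $\le 0$ with at least one strictly negative (take $x$ incident to a positive-weight edge), so the sum is strictly negative, hence nonzero, and Proposition~\ref{prop:GDA} gives $\pi_\beta^{\text{DA}}\neq\pi_\beta^{\text{G}}$ as you intended.
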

\begin{proof}
Suppose that there is a pair $v,w$ of vertices in $V$ such that $J_{v,w} > 0$. Let us consider the particular case where  $\sigmavec$ is a ground state of $H$,  which is a configuration whose spin values are all the same. Then, the identity 
\begin{equation}
E_{y}(\sigmavec^{x}) = E_{y}(\sigmavec) - 4 J_{x,y}  
\end{equation}
holds for any distinct points $x$ and $y$ in $V$. Corresponding to a fixed vertex $x$ in $V$, if we are given the families $(X_{i})_{i \in V}$ and $(Y_{i})_{i \in V}$  of independent Bernoulli random variables 
satisfying  $\mathbb{P}(X_{i} = 1) =  1 - \mathbb{P}(X_{i} = 0)= e^{-\beta E_{i}(\sigmavec)^{+}}$ and $\mathbb{P}(Y_{i} = 1) =  1 - \mathbb{P}(Y_{i} = 0)= e^{-\beta E_{i}(\sigmavec^{x})^{+}}$, it is straightforward to show that 
\begin{equation}\label{ex:RVineq}
R(\sigmavec, \sigmavec^{x})  = \mathbb{E}\left[\frac{1}{\sum_{i \in V} X_{i}} \Bigg| X_{x} = 1\right] 
\geq \mathbb{E}\left[\frac{1}{\sum_{i \in V} Y_{i}}\Bigg| Y_{x} = 1\right] = R(\sigmavec^{x}, \sigmavec).
\end{equation}
In particular, if we assume that $x = v$, the inequality from equation (\ref{ex:RVineq}) is strict, therefore, we conclude from Proposition \ref{prop:GDA} that $\pi_{\beta}^{\text{DA}} \neq \pi_{\beta}^{\text{G}}$. Conversely, suppose all pairwise interactions are null. In that case, it follows that $R(\sigmavec, \sigmavec^{x}) = R(\sigmavec^{x}, \sigmavec)$ for every configuration $\sigmavec$ and vertex $x$, thus, $\pi_{\beta}^{\text{G}}P_{\beta}^{\text{DA}} = \pi_{\beta}^{\text{G}}$.
\end{proof}

\section{Proof of Theorem \ref{DAannealing}}\label{sec:proof}

In this section, we derive Theorem \ref{DAannealing} as a particular case of \cite[Theorem 2]{Hajek88}. In order to do so, let us start by formulating our problem in a more general setting. 
Let us consider a nonempty finite state space $\Omega$, an energy function $H:\Omega \to \mathbb{R}$, and a system $\mathcal{N} = \{N(\sigmavec): \sigmavec \in \Omega\}$ of neighbors of states. Recall that we are particularly interested in the set of ground states (global minima) of $H$ defined as 
\begin{equation}
    \text{GS} = \{\sigmavec \in \Omega: H(\sigmavec) = \min_{\tauvec \in \Omega} H(\tauvec)\}. 
\end{equation}
In this setting, the property of irreducibility can be defined as follows. 

\begin{definition}[Irreducibility]
The pair $(\Omega,\mathcal{N})$ is said to be irreducible if for any distinct states $\sigmavec$ and $\tauvec$ there is a sequence $\sigmavec_{0} = \sigmavec$, $\sigmavec_{1}$, $\dots$, $\sigmavec_{n} = \tauvec$ in $\Omega$, where $n \geq 1$, such that
$\sigmavec_{k} \in N(\sigmavec_{k-1})$ whenever $1 \leq k \leq n$. 
\end{definition}

In addition to the property of irreducibility, it will be necessary to assume weak reversibility. Before introducing such a notion, let us define the idea of reachability. For any pair $\sigmavec,\tauvec$ of points in $\Omega$, $\tauvec$ is said to be reachable from $\sigmavec$ at height $E$ if either  $\sigmavec = \tauvec$ and $H(\sigmavec) \leq E$, or $\sigmavec \neq \tauvec$ and there exists a sequence of states $\sigmavec_{0} = \sigmavec$, $\sigmavec_{1}$, $\dots$, $\sigmavec_{n} = \tauvec$, where $n \geq 1$, such that  $\max_{0 \leq i \leq n}H(\sigmavec_{i}) \leq E$ and 
$\sigmavec_{k} \in N(\sigmavec_{k-1})$ for each $k$ such that $1 \leq k \leq n$. 

\begin{definition}[Weak reversibility]
The triple $(\Omega, H,\mathcal{N})$ is said to be weakly reversible if for any states $\sigmavec$ and $\tauvec$ and any real number $E$, $\sigmavec$ is reachable from $\tauvec$ at height $E$ if and only if $\tauvec$ is reachable from $\sigmavec$ at height $E$. 
\end{definition}

From now on, let us assume the properties of irreducibility and weak reversibility. 
In order to state Theorem \ref{DAannealing2}, it is necessary to introduce the notions of a cup and local minima. A subset $C$ of $\Omega$ is called a cup if there exists some real number $E$ such that for any $\sigmavec \in C$, the set $C$ can be expressed as the set of all states that are reachable from $\sigmavec$ at height $E$.
Then, given a cup $C$, we define its boundary by  
\begin{equation}
\partial C = \{\tauvec \notin C: \text{$\tauvec \in N(\sigmavec)$ for some $\sigmavec$ in $C$}\}, 
\end{equation}
its bottom by 
\begin{equation}
B(C) = \{\sigmavec \in C: H(\sigmavec) = \min_{\tauvec \in C} H(\tauvec)\}, 
\end{equation}
and its depth by 
\begin{equation}
d(C) = \min_{\sigmavec \in \partial C}H(\sigmavec) - \min_{\sigmavec \in C} H(\sigmavec). 
\end{equation}
Furthermore, we say that $\sigmavec$ is a local minimum if there is no $\tauvec$ satisfying $H(\tauvec) < H(\sigmavec)$ which is reachable from $\sigmavec$ at height $H(\sigmavec)$. So, the depth of a local minimum $\sigmavec$ which is not a ground state is defined as the smallest positive real number $E$ so that there exists $\tauvec$ with $H(\tauvec) < H(\sigmavec)$ that is reachable from $\sigmavec$ at height $H(\sigmavec) + E$; otherwise, in case $\sigmavec$ is a ground state, its depth is defined as infinity. It is straightforward to show that every local minimum of depth $d$ is contained in the bottom of a cup of depth $d$.

 Although the motivation of the problem proposed by B. Hajek in \cite{Hajek88} was formulated with the Metropolis dynamics in mind, his main result was proven in a rather more general setting. Hajek considered a continuous-time process $(W_t)_{t \in [0,\infty)}$ defined by letting $W_t = X_k$ whenever $U_k \leq t < U_{k+1}$, derived from a discrete-time homogeneous Markov process $(U_{k}, X_{k})_{k \geq 0}$ on the state space $[0,\infty) \times \Omega$ whose transition probabilities assume the form 
\begin{equation}\label{eq:Markov2}
\mathbb{P}(U_{k+1} \geq u, X_{k+1} = \tauvec| \text{$(U_{i}, X_{i}) = (u_{i}, \sigmavec_{i})$ for all $0 \leq i \leq k$}) = \int_{u}^{\infty}Q(\sigmavec_{k},\tauvec,t,\lambda_{t})\Phi(dt, u_{k},\sigmavec_{k}).
\end{equation}
In the equation above, we suppose that $(\lambda_t)_{t \in [0,\infty)}$ is a nonincreasing family of numbers in the interval $(0,1)$ such that $\lim_{t \to \infty}\lambda_{t} = 0$.
Moreover, we assume the existence of positive constants $a, D, c_1$, and $c_2$ such that 
\begin{itemize}
    \item $\mathbb{E}[\min\{k: U_k \geq t + a\}| U_{0} = t, X_{0} = \sigmavec] \leq \frac{1}{a}$ holds for all $t \geq 0$ and $\sigmavec \in \Omega$.

\item $\Phi(\cdot, s,\sigmavec)$ is a probability distribution function such that 
\begin{equation}
    \Phi(s^{-},s,\sigmavec) = 0
\end{equation}
and 
\begin{equation}
    \int_{0}^{\infty} (t-s)\Phi(dt,s,\sigmavec) \leq D.
\end{equation}

\item $Q(\cdot,\cdot,t,\lambda)$ is an stochastic matrix such that the conditions
    \begin{equation}
        \text{$c_1 \lambda^{[H(\tauvec) - H(\sigmavec)]^{+}} \leq Q(\sigmavec,\tauvec,t,\lambda) \leq c_2 \lambda^{[H(\tauvec) - H(\sigmavec)]^{+}}$ for $\tauvec \in N(\sigmavec)$}
    \end{equation}
and
    \begin{equation}
        \text{$Q(\sigmavec,\tauvec,t,\lambda) = 0$ for $\tauvec \notin N(\sigmavec) \cup \{\sigmavec\}$}
    \end{equation}
    hold whenever $t \geq 0$ and $0 \leq \lambda \leq 1$.
\end{itemize}

\begin{theorem}[\cite{Hajek88}]\label{DAannealing2}
Let us assume that $(\Omega,H,\mathcal{N})$ is irreducible and weakly reversible, and $(W_{t})_{t \in [0,\infty)}$ is the continuous-time process defined as above. Then, the following conditions hold.

\begin{enumerate}[label=(\alph*),ref=\alph*]
\item Every state $\sigmavec$ which is not a local minimum satisfies
\begin{equation}
\lim_{t \to \infty}  \mathbb{P}(W_{t} = \sigmavec) = 0.
\end{equation}

\item Given a cup $C$ of depth $d$ whose elements of its bottom are local minima of depth $d$, then
\begin{equation}
\lim_{t \to \infty}  \mathbb{P}(W_{t} \in B(C)) = 0
\end{equation}
if and only if
\begin{equation}
\int_{0}^{\infty} \lambda_{t}^{d} dt  = +\infty.
\end{equation}

\item\label{item:annealing} If we define  $\gamma^{\ast}$ as the depth of the second deepest local minimum, then the limit
\begin{equation}\label{eq:limprob2}
\lim_{t \to \infty}  \mathbb{P}(W_{t} \in \text{GS}) = 1
\end{equation}
holds if and only if
\begin{equation}
\int_{0}^{\infty} \lambda_{t}^{\gamma^{*}} dt = +\infty.
\end{equation}
\end{enumerate}
\end{theorem}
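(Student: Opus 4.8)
The plan is to derive all three parts from a single quantitative estimate---the law of the time the process spends inside a cup before escaping---and then to organize these estimates by induction over the hierarchy of cups ordered by depth. The only structural inputs I would use are irreducibility, weak reversibility, and the two-sided comparison $c_1\lambda^{[H(\tauvec)-H(\sigmavec)]^+}\le Q(\sigmavec,\tauvec,t,\lambda)\le c_2\lambda^{[H(\tauvec)-H(\sigmavec)]^+}$ for $\tauvec\in N(\sigmavec)$. In particular I would avoid any appeal to an explicit stationary distribution, since $Q$ is only comparable to, not equal to, a Gibbs kernel.

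First I would prove the basic exit estimate at frozen temperature. Fix a cup $C$ of depth $d=d(C)$, a value $\lambda$, and start at the bottom $B(C)$. Every trajectory from $B(C)$ to $\partial C$ must climb a net height at least $d$; since along any path the factors $\lambda^{[\Delta H]^+}$ multiply to at most $\lambda^{d}$ (downward steps being free), the probability of reaching $\partial C$ within a bounded number of steps is at most $c\lambda^{d}$. Weak reversibility supplies a reverse path of the same height profile, and the lower bound on $Q$ then yields a reaching probability of at least $c'\lambda^{d}$. Consequently, over a block of $n$ discrete steps on which $\lambda_t\equiv\lambda$, the probability of escaping $C$ is comparable to $n\lambda^{d}$ when this is small, and the probability of surviving inside $C$ is comparable to $\exp(-c\,n\lambda^{d})$.

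Second I would pass from blocks to the integral condition. Partitioning $[0,\infty)$ into slabs on which $\lambda_t$ is essentially constant and using the hypothesis $\mathbb{E}[\min\{k:U_k\ge t+a\}\mid U_0=t,X_0=\sigmavec]\le 1/a$ to convert step counts into continuous lengths, the probability of surviving inside $C$ over $[s,\infty)$ becomes comparable to $\exp(-c\int_s^\infty\lambda_t^{d}\,dt)$. A Borel--Cantelli dichotomy then gives part (b): when $\int_0^\infty\lambda_t^{d}\,dt<\infty$ there is positive probability of being trapped in $C$ forever, where the process settles onto the lowest-energy states $B(C)$, so $\mathbb{P}(W_t\in B(C))$ does not converge to $0$; when the integral diverges the process escapes $C$ and is driven to strictly lower-lying cups, forcing $\mathbb{P}(W_t\in B(C))\to 0$. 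Part (a) is the degenerate zero-barrier case: a state $\sigmavec$ that is not a local minimum admits, by definition, a path to a strictly lower state that never rises above $H(\sigmavec)$, each step of which has probability at least $c_1\lambda^{0}=c_1$ uniformly in $\lambda$, so the process cannot dwell at $\sigmavec$ and $\mathbb{P}(W_t=\sigmavec)\to 0$.

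Finally I would assemble part (c). Convergence to $\text{GS}$ in \refeq{limprob2} amounts to the process eventually escaping the cup bottomed by every non-ground-state local minimum and never being recaptured; the most demanding such escape is posed by the deepest non-ground-state local minimum, whose depth is exactly $\gamma^{\ast}$ (the second deepest among all local minima, ground states having infinite depth). Since $\lambda_t\to 0$, divergence of $\int_0^\infty\lambda_t^{\gamma^{\ast}}\,dt$ entails divergence of $\int_0^\infty\lambda_t^{d}\,dt$ for every $d\le\gamma^{\ast}$, so part (b) applied to each such cup yields \refeq{limprob2}; conversely, convergence of the $\gamma^{\ast}$-integral produces a positive trapping probability in the deepest trap and defeats \refeq{limprob2}. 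The principal obstacle is the inductive step. The exit estimate for $C$ tacitly assumes that, while inside $C$, the process equilibrates over $C$ on a time scale short compared with the escape time of order $\lambda^{-d}$; but $C$ contains shallower subcups, of depths $d'<d$, in which the process may linger. The resolution is the separation of time scales furnished by the induction: a subcup of depth $d'$ is vacated on the much shorter scale $\lambda^{-d'}$, so internal motion within $C$ is fast relative to escape from $C$, and $C$ may be coarse-grained to a single effective state at its own level. Propagating the constants $c_1,c_2$ through this coarse-graining across all levels without degradation is the real bookkeeping burden---no individual estimate is hard, but organizing them consistently over the nested cup structure is where the difficulty concentrates.
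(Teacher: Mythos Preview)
The paper does not prove Theorem~\ref{DAannealing2}; it is quoted verbatim as \cite[Theorem~2]{Hajek88} and used as a black box. The only proof in Section~\ref{sec:proof} is the verification that the Digital Annealer chain fits Hajek's hypotheses (choosing $Q=P^{\text{DA}}_{\log\lambda^{-1}}$, $\Phi(t,s,\sigmavec)=\indic_{\{t\ge s+1\}}$, $\lambda_t=e^{-\beta_{\lfloor t\rfloor}}$, and checking the two-sided bound $\tfrac{1}{|V|}\lambda^{[\Delta H]^+}\le Q\le\lambda^{[\Delta H]^+}$ from \refeq{relation}), after which Theorem~\ref{DAannealing} follows immediately from part~(\ref{item:annealing}). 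So there is no ``paper's own proof'' to compare against; you have instead sketched a proof of Hajek's theorem itself.

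As such a sketch, your outline is faithful to Hajek's actual argument: the cup hierarchy, the two-sided exit-time estimate $\asymp\lambda^{d(C)}$ per block derived from the $Q$-bounds and weak reversibility, the passage to $\exp(-c\int\lambda_t^d\,dt)$, and the induction over depths with separation of time scales are exactly the ingredients of \cite{Hajek88}. The main place your sketch is thin is the one you yourself flag: controlling the constants through the coarse-graining. Hajek handles this by proving uniform estimates on the hitting distribution of $\partial C$ (not just the hitting time), showing that exit occurs through the lowest points of $\partial C$ with probability bounded below independently of $\lambda$, which is what allows the induction to close without the constants degenerating. Your statement that ``the probability of escaping $C$ within $n$ steps is comparable to $n\lambda^d$'' also needs the caveat that this holds once the process has reached $B(C)$; the matching lower bound on escape requires first showing that from any $\sigmavec\in C$ the bottom is reached in time $O(\lambda^{-d'})$ for some $d'<d$, which is again part of the inductive package.
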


\begin{proof}[Proof of Theorem \ref{DAannealing} assuming Theorem \ref{DAannealing2}]
Let us show that the setting assumed in Theorem \ref{DAannealing} fits as a particular case of Theorem \ref{DAannealing2}. In the following, let us consider the triple $(\Omega, H, \mathcal{N})$, where $\Omega = \{-1,+1\}^{V}$ is the set of all Ising spin configurations on the vertex set $V$ of a graph, $H$ is the Hamiltonian given by equation (\ref{eq:Hamiltonian}), and $\mathcal{N} = \{N(\sigmavec): \sigmavec \in \Omega\}$ is the system of neighbors of spin configurations defined as follows. Given a configuration $\sigmavec$ in $\Omega$, let $N(\sigmavec)$ be the set of neighbors of $\sigmavec$ defined by 
\begin{equation}
N(\sigmavec) = \{\sigmavec^{x}: x \in V\}.
\end{equation}
It is straightforward to show that the triple $(\Omega, H, \mathcal{N})$ is irreducible and weakly reversible.

Let us consider the discrete-time homogeneous Markov chain $(U_{k}, X_{k})_{k \geq 0}$ on the state space $[0,\infty) \times \Omega$ whose transition probabilities assume the form (\ref{eq:Markov2}), with the transition matrix $Q(\cdot, \cdot, t, \lambda)$, the probability distribution function $\Phi(\cdot ,s,\sigmavec)$ and the family of real numbers $(\lambda_{t})_{t \in [0,\infty)}$ chosen by letting $Q(\cdot, \cdot, t, \lambda) = P_{\log \lambda^{-1}}^{\text{DA}}$, $\Phi(t ,s,\sigmavec) = \indic_{\{t \geq s+1\}}$, and  
\[ \lambda_{t} =
\begin{cases}
    \exp(-\beta_{\lfloor t \rfloor}) &\text{if $t \geq 1$,}\\
    \exp(-\beta_{1}) &\text{otherwise;}
\end{cases}
\] 
where $\lfloor t \rfloor$ stands for the greatest integer less than or equal to $t$. Note that equations (\ref{eq:DAdef}) and  (\ref{eq:relation}) imply that 
\begin{equation}
\frac{1}{|V|}  \lambda^{[H(\tauvec) - H(\sigmavec)]^{+}} \leq Q(\sigmavec, \tauvec, t, \lambda) \leq \lambda^{[H(\tauvec) - H(\sigmavec)]^{+}}
\end{equation}
holds for each $\tauvec$ in $N(\sigmavec)$, and $Q(\sigmavec, \tauvec, t, \lambda) = 0$ whenever $\tauvec$ does not belong to $N(\sigmavec) \cup \{\sigmavec\}$. Furthermore, equation (\ref{eq:Markov2}) is expressed as 
\begin{equation}
\mathbb{P}(U_{k+1} \geq u, X_{k+1} = \tauvec | \text{$(U_{i}, X_{i}) = (u_{i}, \sigmavec_{i})$ for all $0 \leq i \leq k$}) =   P_{\beta_{\lfloor u_{k} + 1\rfloor}}^{\text{DA}}(\sigmavec_{k},\tauvec) \indic_{\{u_{k}+1 \geq u\}},
\end{equation}
which implies that 
\begin{equation}
\mathbb{P}(U_{k+1} = u_{k} + 1, X_{k+1} = \tauvec |  \text{$(U_{i}, X_{i}) = (u_{i}, \sigmavec_{i})$ for all $0 \leq i \leq k$}) =   P_{\beta_{\lfloor u_{k} + 1\rfloor}}^{\text{DA}}(\sigmavec_{k},\tauvec).
\end{equation}
In the particular case where $U_0 = 0$, it is straightforward to show that $\mathbb{P}(U_k=k) = 1$ for each $k \geq 0$ and the one-step transition probabilities of $(X_k)_{k \geq 0}$ take the form (\ref{eq:Markov}). Since the hypotheses of Theorem \ref{DAannealing2} are applicable to this case, then, by using the continuous-time stochastic process $(W_t)_{t \in [0, \infty)}$ evaluated on integer times and Theorem \ref{DAannealing2}(\ref{item:annealing}), the result follows.
\end{proof}

\section{Concluding remarks}
First, let us recall that the general algorithm introduced in \cite{DA2019} relies on the implementation of the so-called dynamic offset in order to encourage the system to flip more spins and prevent it from getting trapped in a local minimum. Thus, in order to prove statements regarding the convergence to the ground states for such an algorithm, one should rely on a different approach since the Markov property would no longer be valid.

On the other hand, note that in the proof of Theorem \ref{DAannealing2} the assumption that $H$ is given as in equation (\ref{eq:Hamiltonian}) was not necessary, therefore,  we do not have to restrict ourselves only to QUBO problems and Theorem \ref{DAannealing} can also be applied to search for ground states of Hamiltonians other than those written in this form. Furthermore, recall that, as discussed in Section \ref{sec:DA}, we dealt with results regarding the asymptotic convergence to the ground states, and, in a realistic scenario, especially when dealing with problems that involve thousands of variables, one does not have infinite time to perform a simulation. Relying on ideas from Freidlin and Wentzel \cite{freidlin2012}, O. Catoni \cite{Catoni1991,Catoni1992,Cot1998} obtained rigorous results regarding finite time simulation, large deviation principles, and optimal cooling schedules for the Metropolis algorithm and Markov chains with rare transitions. Investigations on such topics specifically for the DA and other algorithms (such as those discussed in \cite{SCA21}), including rigorous results concerning the effectiveness of one algorithm over the other, are still in progress.

Another topic that is also worth investigating is the behavior of the corresponding time-homogeneous Markov chain at a fixed temperature in order to provide a complete characterization of its stationary distribution and obtain estimates of its mixing time. As the reader can verify, providing an upper bound for the mixing time at high temperatures such as in \cite{SCA21,TISCIE2022}, where it was possible to show that the mixing times for certain probabilistic cellular automata is upper bounded by a quantity proportional to $\log |V|$, is not straightforward. Hence, there are still interesting directions to be explored.

\section*{Declarations}
This work was supported by JST CREST Grant Number PJ22180021, Japan. Data sharing is not applicable to this article as no datasets were generated or analyzed during the current study. The authors have no relevant financial or non-financial interests to disclose.

\section*{Acknowledgements}
We are grateful to the following members for their continual encouragement and stimulating discussions: 
Masato Motomura and Kazushi Kawamura from the Tokyo Institute of Technology; 
Hiroshi Teramoto from Kansai University; 
Masamitsu Aoki from the Graduate School of 
Mathematics at Hokkaido University. The authors would like to thank 
Thiago Raszeja, Olena Karpel, Dominik Kwietniak, Wioletta Ruszel, Cristian Spitoni, Ross Kang, and Lars Fritz for the hospitality and for providing nice opportunities for discussion during Bruno Hideki Fukushima Kimura's visiting period at the
AGH University of Science and Technology and Utrecht University.

\bibliographystyle{plain}
\bibliography{bibliography}

\end{document}